\providecommand{\U}[1]{\protect\rule{.1in}{.1in}}
\definecolor{darkgreen}{rgb}{0.0, 0.5, 0.0}
\providecommand{\U}[1]{\protect\rule{.1in}{.1in}}
\providecommand{\U}[1]{\protect\rule{.1in}{.1in}}
\providecommand{\U}[1]{\protect\rule{.1in}{.1in}}
\providecommand{\U}[1]{\protect\rule{.1in}{.1in}}
\providecommand{\U}[1]{\protect\rule{.1in}{.1in}}
\providecommand{\U}[1]{\protect\rule{.1in}{.1in}}
\newtheorem{theorem}{Theorem}[section]
\newaliascnt{acknowledgement}{theorem}
\newaliascnt{algorithm}{theorem}
\newaliascnt{axiom}{theorem}
\newaliascnt{case}{theorem}
\newaliascnt{claim}{theorem}
\newaliascnt{conclusion}{theorem}
\newaliascnt{condition}{theorem}
\newaliascnt{conjecture}{theorem}
\newaliascnt{corollary}{theorem}
\newaliascnt{criterion}{theorem}
\newaliascnt{definition}{theorem}
\newtheorem{definition}[definition]{Definition}
\newaliascnt{lemma}{theorem}
\newtheorem{lemma}[lemma]{Lemma}
\newaliascnt{notation}{theorem}
\newaliascnt{proposition}{theorem}
\newaliascnt{solution}{theorem}
\newaliascnt{summary}{theorem}
\theoremstyle{definition}
\newaliascnt{example}{theorem}
\newaliascnt{exercise}{theorem}
\newaliascnt{problem}{theorem}
\newaliascnt{remark}{theorem}
\newtheorem{remark}[remark]{Remark}
\begin{document}

		\title[Operator ranges and spaceability]{Operator Ranges and Spaceability: Extending a Result of Kitson and Timoney}

		\author[G. Ribeiro]{Geivison Ribeiro}
		\address[G.~Ribeiro]{Departamento de Matem\'atica \newline\indent
			Universidade Estadual de Campinas \newline\indent
			13083-970 -- Campinas, Brazil.}
		\email{\href{mailto:geivison@unicamp.br}{geivison@unicamp.br} \textrm{and} \href{mailto:geivison.ribeiro@academico.ufpb.br}{geivison.ribeiro@academico.ufpb.br}}
		
		\keywords{Lineability, spaceability, operator range, Fr\'echet space, uncountable families}
		\subjclass[2020]{15A03, 46B87, 46A16, 28A99}
		
	\begin{abstract}
		We revisit the results of Kitson and Timoney \emph{[J.~Math.~Anal.~Appl.~\textbf{378} (2011), 680--686]} on the spaceability of complements of operator ranges, extending one of their main theorems to the general Fr\'echet setting. In particular, we provide an affirmative answer to the question posed in \emph{Remark~3.4} of that paper, showing that the conclusion remains valid when the operators act between Fr\'echet spaces. Moreover, we show that the same phenomenon occurs for arbitrary (possibly uncountable) families of operators. The arguments presented here follow the spirit of the original work.
	\end{abstract}

	\maketitle
	
\section{Introduction}

The notions of \emph{lineability} and \emph{spaceability} trace back to the pioneering ideas of Gurariy~\cite{Gurariy} and were later formalized, in their modern form, by Aron, Gurariy and Seoane-Sepúlveda in 2005~\cite{AGS}. Since then, these topics have become an active field of research in Analysis (and particularly in Functional Analysis), with a systematic treatment given in the 2016 monograph by Aron, Bernal-González, Pellegrino and Seoane-Sepúlveda~\cite{AGPS}. Over the past decades, several general criteria and refinements have been established; see, among others, Bernal-González and Ordóñez-Cabrera (2014)~\cite{bernaljfa}, Fávaro, Pellegrino and Tomaz (2020)~\cite{FPT}, and Calderón-Moreno, Gerlach-Mena and Prado-Bassas (2023)~\cite{MMB}, as well as more recent contributions in~\cite{Araujo,ABRR,ABRR-2025,PR,Raposo,DR,RSR,Leo}.

A conceptual milestone in this theory is due to Drewnowski (1984)~\cite{Drewnowski1984}: if $T:E\to F$ is a continuous linear operator between Fréchet spaces with non-closed range, then the complement $(F\setminus T(E))\cup\{0\}$ contains a closed infinite-dimensional subspace. This result has become a central tool in the study of large linear structures within nonlinear sets.

Following this perspective, Kitson and Timoney (2011)~\cite{KitsonTimoney2011} proved that, for a Fréchet space $X$ and a countable family of continuous linear operators $(T_n:Z_n\to X)_{n\in\mathbb{N}}$ (with each $Z_n$ being a Banach space), the complement of the linear span of the ranges is \emph{spaceable} whenever that span is not closed. The authors also asked whether their result extends to the case in which the domain spaces $Z_n$ are themselves Fréchet (see \emph{Remark~3.4} in~\cite{KitsonTimoney2011}).

The present paper gives an affirmative answer to this question. We show that the same result remains valid for families of operators $T_n:Z_n\to X$ acting between Fréchet spaces, by encoding the family into the range of a single operator and applying geometric arguments in the spirit of~\cite{Drewnowski1984}. Moreover, we remove the countability assumption and show that the same conclusion holds for \emph{arbitrary} (possibly uncountable) families of operators.

\section{Analytic and geometric preliminaries}

In this section we collect auxiliary notions and tools that will be used throughout the paper.

\begin{definition}{\cite[Definition~8.1.6]{BotelhoPellegrinoTeixeira2025}}
	Let $E$ be a topological vector space over $\mathbb{K}\in\{\mathbb{R},\mathbb{C}\}$. 
	A set $B\subset E$ is said to be \emph{balanced} if $\lambda x\in B$ for every $x\in B$ and every $\lambda\in\mathbb{K}$ with $|\lambda|\le1$.
\end{definition}

\medskip
The next two lemmas are elementary analytical tools ensuring lower and monotone semicontinuity for infinite series of nonnegative terms. They are included for completeness since they will be repeatedly used in the proof of the main theorem.

\begin{lemma}\label{lem:fatou-series}
	Let $(a_n^{(t)})_{n,t\in\mathbb{N}}$ be a family of nonnegative numbers, and define
	$a_n:=\liminf_{t\to\infty}a_n^{(t)}$ (possibly $+\infty$). Then
	\[
	\sum_{n=1}^\infty a_n \;\le\; \liminf_{t\to\infty}\sum_{n=1}^\infty a_n^{(t)}.
	\]
\end{lemma}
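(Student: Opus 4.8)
The plan is to reduce the infinite-sum inequality to its finite truncations and invoke the elementary superadditivity of $\liminf$, all interpreted in the extended half-line $[0,+\infty]$ so that the statement is meaningful even when some $a_n=+\infty$ or the series diverge. Fix $N\in\mathbb{N}$. For a finite index set the $\liminf$ of a sum dominates the sum of the $\liminf$'s, so
\[
\liminf_{t\to\infty}\sum_{n=1}^N a_n^{(t)}\;\ge\;\sum_{n=1}^N\liminf_{t\to\infty}a_n^{(t)}\;=\;\sum_{n=1}^N a_n .
\]
Since every term is nonnegative, $\sum_{n=1}^N a_n^{(t)}\le\sum_{n=1}^\infty a_n^{(t)}$ for each $t$, and monotonicity of $\liminf$ yields $\liminf_{t\to\infty}\sum_{n=1}^N a_n^{(t)}\le\liminf_{t\to\infty}\sum_{n=1}^\infty a_n^{(t)}$. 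Combining the two displays gives $\sum_{n=1}^N a_n\le\liminf_{t\to\infty}\sum_{n=1}^\infty a_n^{(t)}$ for every $N$.

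Next I would let $N\to\infty$. The left-hand side is the increasing limit of the partial sums, which converges (in $[0,+\infty]$) to $\sum_{n=1}^\infty a_n$, while the right-hand side does not depend on $N$. Passing to the limit over $N$ therefore produces
\[
\sum_{n=1}^\infty a_n\;\le\;\liminf_{t\to\infty}\sum_{n=1}^\infty a_n^{(t)},
\]
which is the desired inequality.

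The only ingredient that is not purely formal is the finite superadditivity $\liminf_t(x_t+y_t)\ge\liminf_t x_t+\liminf_t y_t$, which remains valid in $[0,+\infty]$ because, all summands being nonnegative, no $\infty-\infty$ indeterminacy can occur; it is proved by extracting a subsequence realizing the left-hand $\liminf$ and then a further subsequence along which $x_t$ and $y_t$ each converge in $[0,+\infty]$. I do not anticipate any real obstacle here: the lemma is nothing but Fatou's lemma for the counting measure on $\mathbb{N}$, and it is recorded only to keep the later arguments self-contained.
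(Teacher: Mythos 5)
Your argument is correct. The paper gives no proof of this lemma at all --- it simply notes that the statement is Fatou's lemma for nonnegative series (i.e.\ for the counting measure on $\mathbb{N}$) and omits the details. Your truncate-to-$N$-terms argument, using superadditivity of $\liminf$ over finite sums and then letting $N\to\infty$ in $[0,+\infty]$, is exactly the standard elementary proof of that fact, and all the steps (including the remark that nonnegativity rules out any $\infty-\infty$ indeterminacy) check out.
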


\begin{lemma}\label{lem:monotone-series}
	Let $(b_{r,n})_{r,n\in\mathbb{N}}$ be a family of nonnegative numbers such that
	$b_{r,n}\uparrow b_n$ as $r\to\infty$ for each $n$. Then
	\[
	\sum_{n=1}^\infty b_n \;=\; \lim_{r\to\infty}\sum_{n=1}^\infty b_{r,n}.
	\]
\end{lemma}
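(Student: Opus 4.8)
The plan is to prove the identity by a two-sided inequality, using monotonicity to ensure that every limit appearing exists in $[0,+\infty]$ so that no convergence issue has to be addressed separately. First I would record that, since $b_{r,n}\le b_{r+1,n}$ for every $n$, the sequence of partial sums $S_r:=\sum_{n=1}^\infty b_{r,n}$ is nondecreasing in $r$; hence $\lim_{r\to\infty}S_r$ exists as an element of $[0,+\infty]$ and coincides with $\sup_{r}S_r$. This reduces the statement to establishing $\lim_{r\to\infty}S_r\le\sum_{n=1}^\infty b_n$ and the reverse.

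For the inequality $\lim_{r\to\infty}S_r\le\sum_{n=1}^\infty b_n$, I would note that $b_{r,n}\le b_n$ for all $r,n$ (as $b_n=\sup_r b_{r,n}$), so $S_r\le\sum_{n=1}^\infty b_n$ for every $r$, and then pass to the limit in $r$. The reverse inequality is the only step that needs an argument: for fixed $N\in\mathbb{N}$ we have $\sum_{n=1}^N b_{r,n}\le S_r$, and since the left-hand side is a \emph{finite} sum, letting $r\to\infty$ commutes with it and gives $\sum_{n=1}^N b_n\le\lim_{r\to\infty}S_r$. As $N$ is arbitrary, taking the supremum over $N$ yields $\sum_{n=1}^\infty b_n\le\lim_{r\to\infty}S_r$. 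Combining the two inequalities proves the claim.

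There is no genuine obstacle here; the only point requiring care is that several of these quantities may equal $+\infty$, which is why I would phrase everything in $[0,+\infty]$ and work with suprema rather than with $\varepsilon$-arguments. As an alternative to the explicit double-limit step, one could deduce the nontrivial inequality directly from Lemma~\ref{lem:fatou-series} applied with the index $r$ playing the role of $t$: the hypothesis $b_{r,n}\uparrow b_n$ gives $\liminf_{r\to\infty}b_{r,n}=b_n$, so that lemma yields $\sum_{n=1}^\infty b_n\le\liminf_{r\to\infty}S_r=\lim_{r\to\infty}S_r$, while the matching upper bound is obtained exactly as above.
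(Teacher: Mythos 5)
Your proof is correct and is precisely the standard argument behind the classical monotone convergence theorem for nonnegative series, which is exactly what the paper invokes (it omits the proof, citing that classical result). Both directions of your inequality are sound, and your care with values in $[0,+\infty]$ is appropriate.
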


The proofs follow directly from the classical Fatou and monotone convergence theorems for nonnegative series and are therefore omitted.

\medskip
We now recall a fundamental geometric result underlying our approach. 

\begin{theorem}[{\cite[Prop.~2.4]{KitsonTimoney2011}; see also \cite{Drewnowski1984}}]\label{thm:drewnowski}
	Let $E$ and $F$ be Fr\'echet spaces, and let $T:E\to F$ be a continuous linear operator.
	If $T(E)$ is not closed in $F$, then the complement $F\setminus T(E)$
	contains a closed infinite-dimensional subspace.
\end{theorem}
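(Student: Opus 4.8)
The plan is to carry the classical Baire-category argument, combined with a gliding-hump construction of a ``transversal'' subspace, into the Fr\'echet setting. Replacing $F$ by the Fr\'echet space $\overline{T(E)}$, I may assume $T(E)$ is dense in $F$ while $T(E)\neq F$. Fix a convex balanced neighbourhood $U$ of $0$ in $E$; since $E=\bigcup_{m\ge1}mU$ we get $T(E)=\bigcup_{m\ge1}T(mU)\subseteq\bigcup_{m\ge1}A_m$, where $A_m:=\overline{T(mU)}$ is closed, convex and balanced and $(A_m)_m$ is increasing. Each $A_m$ is nowhere dense: otherwise, being closed, convex and balanced with nonempty interior, $A_m$ is a neighbourhood of $0$, and the standard open-mapping lemma for the complete metrizable space $E$ forces $T$ to send neighbourhoods of $0$ to neighbourhoods of $0$, whence $F=\bigcup_{m\ge1}T(mU)=T(E)$ is closed, contradicting $T(E)\neq F$. (Equivalently: a continuous linear map between Fr\'echet spaces whose range is non-meagre is onto, so $T(E)$ is meagre.) Hence it suffices to build a closed infinite-dimensional subspace $Y$ of $F$ with $Y\cap\bigcup_{m\ge1}A_m=\{0\}$.

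Fix a translation-invariant $F$-norm $|\cdot|$ inducing the topology of $F$. I would construct inductively a linearly independent sequence $(y_k)_{k\ge1}$ in $F\setminus\{0\}$ with $|y_k|\le 2^{-k}$, together with a rapidly decreasing sequence $\delta_k>0$, designed so that every nonzero finite linear combination $\sum_{k=1}^{N}c_ky_k$ with $c_N\neq0$ stays at distance at least $\delta_N$ from $A_N$ once normalised so that the leading coefficient has modulus $1$. At stage $n$ one exploits that $F\setminus A_n$ is open and dense, that $A_n$ is balanced, and that the coefficients entering the constraints active at that stage are controlled by the already chosen finite-dimensional space $\operatorname{span}\{y_1,\dots,y_{n-1}\}$, choosing $y_n$ very close to $0$ and then fixing $\delta_n$; the sizes of the $|y_k|$ and $\delta_k$ are chosen diagonally so that the constraints coming from $A_1\subseteq A_2\subseteq\cdots$ are mutually consistent. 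Put $Y:=\overline{\operatorname{span}}\{y_k:k\ge1\}$, a closed infinite-dimensional subspace of $F$.

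It remains to check $Y\cap T(E)=\{0\}$. If $y\in Y\cap T(E)$ then $y\in A_m$ for some $m$, and $y$ is an $F$-limit of finite linear combinations of the $y_k$; using the transversality property together with the closedness and balancedness of $A_m$ and the decay of the $|y_k|$, one transfers the margin from the finite ``heads'' to the limit and concludes that all coefficients of $y$ vanish, i.e. $y=0$. Thus $(F\setminus T(E))\cup\{0\}\supseteq Y$, which is the assertion (with the usual reading that the complement ``contains'' a subspace). I expect the genuinely delicate point to be exactly this transfer step: one must control membership of a \emph{convergent series} $\sum_kc_ky_k$ in the balanced sets $A_m$, not merely of a finite truncation, and this forces the decay rates $|y_k|$, the margins $\delta_k$, and the diagonalisation over $m$ in the construction to be fixed in precisely the right order. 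A related subtlety, which is what prevents a purely soft argument, is that neither $A_m\cap Y$ nor $A_m+\operatorname{span}\{y_1,\dots,y_{n-1}\}$ need be nowhere dense for a careless choice of the $y_k$, so Baire category inside $Y$ cannot simply replace the explicit construction.
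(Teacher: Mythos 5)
The paper does not actually prove this statement: it is quoted from Kitson--Timoney (Prop.~2.4) and ultimately from Drewnowski, so your attempt has to be judged on its own terms. It has a genuine gap at its first substantive step, namely the claim that each $A_m=\overline{T(mU)}$ is nowhere dense for a \emph{single} fixed convex balanced neighbourhood $U$ of $0$ in $E$. Your justification invokes the open-mapping (Schauder) lemma, but that lemma needs $\overline{T(V)}$ to be a neighbourhood of $0$ for all $V$ in a neighbourhood \emph{base} of $0$ in $E$; the dilates $2^{-n}U$ of one $U$ form a base only when $U$ is bounded, i.e.\ essentially only when $E$ is normable. Concretely, take $E=C^\infty[0,1]$ with its usual Fr\'echet topology, $F=C[0,1]$, $T$ the inclusion (dense, non-closed range), and $U=\{f\in E:\ \|f\|_\infty<1\}$: then $\overline{T(U)}$ is the closed unit ball of $C[0,1]$, which has nonempty interior, yet $T$ is not onto. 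So the $A_m$ need not be nowhere dense, and the gliding-hump construction that follows (which repeatedly uses that $F\setminus A_n$ is open and dense) collapses. Your parenthetical remark that $T(E)$ is meagre in $\overline{T(E)}$ is correct, but meagreness alone does not yield a cover of $T(E)$ by closed \emph{convex balanced nowhere dense} sets, and some such structured cover is indispensable: a dense hyperplane is also a non-closed (and possibly meagre) subspace, yet its complement contains no two-dimensional subspace, so no argument using only ``$T(E)$ is a meagre non-closed subspace'' can succeed.

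Producing a usable countable cover is exactly the difficulty your sketch assumes away, and it is where the Banach and Fr\'echet cases genuinely differ: for Banach $E$ your choice $A_m=m\,\overline{T(B_E)}$ does work, which is the classical situation, whereas the Fr\'echet case is the whole content of the statement. The paper never reproves \autoref{thm:drewnowski}, but its proof of \autoref{thm:Frechet-KT} illustrates the machinery that is actually needed: pass to a closed separable subspace $\widetilde X$ containing the obstruction to closedness, use the Lindel\"of property to write the relevant set minus the origin as a countable union of closed convex sets avoiding $0$, and then invoke Drewnowski's Corollary~5.5 to extract the closed infinite-dimensional subspace. Your final step (transferring the ``margin'' from finite heads to infinite convergent sums) is also only sketched --- in the standard arguments it is carried out by pairing each $y_n$ with a continuous linear functional bounded on $A_n$ rather than by metric estimates alone --- but the decisive defect is the nowhere-density claim.
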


In fact, this geometric statement was originally proved by Drewnowski~\cite{Drewnowski1984}, 
under the assumption that $T(E)$ is not relatively open or not barrelled. 
Here we follow the presentation adopted by Kitson and Timoney~\cite{KitsonTimoney2011}, 
who used this result as a central tool in their study of spaceable complements of operator ranges.

\section{The main result}

We now state and prove the Fr\'echet version of Theorem~3.3 in \cite{KitsonTimoney2011}.

\begin{theorem}\label{thm:Frechet-KT}
	Let $(Z_n)_{n\in\mathbb{N}}$ be Fr\'echet spaces. 
	If $X$ is a Fr\'echet space and $T_n:Z_n\to X$ is a family of continuous linear operators, and
	\[
	Y:=\operatorname{span}\Big(\bigcup_{n\in\mathbb{N}}T_n(Z_n)\Big)
	\]
	is not closed in $X$, then the complement $X\setminus Y$ is spaceable.
\end{theorem}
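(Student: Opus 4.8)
The plan is to reduce the countable family to a single operator, then invoke Theorem~\ref{thm:drewnowski}. First I would form the product Fréchet space $Z:=\prod_{n\in\mathbb{N}}Z_n$ equipped with the product topology, which is again a Fréchet space (a countable product of Fréchet spaces is Fréchet). The naive idea of defining $T:Z\to X$ by $T((z_n)_n):=\sum_n T_n(z_n)$ fails because the series need not converge; the standard fix is to rescale. Using a translation-invariant metric $d_n$ on each $Z_n$ (equivalently, an increasing sequence of seminorms $(p_{n,k})_k$), I would pick positive scalars $\lambda_n$ decreasing to $0$ fast enough that the map
\[
T\colon Z\longrightarrow X,\qquad T\big((z_n)_n\big):=\sum_{n=1}^\infty \lambda_n\, T_n(z_n)
\]
is well defined and continuous. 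Concretely, choosing the $\lambda_n$ so that $\sum_n \lambda_n \cdot q_k(T_n(z_n))$ converges for each continuous seminorm $q_k$ on $X$ and each bounded-in-each-coordinate sequence — this is where Lemmas~\ref{lem:fatou-series} and~\ref{lem:monotone-series} enter, to justify interchanging the series with liminf/sup when checking continuity and closedness of graphs. The key point is that $T(Z)=\operatorname{span}\big(\bigcup_n T_n(Z_n)\big)=Y$: every $\lambda_n T_n(z_n)$ lies in $T(Z)$ by taking the sequence supported at $n$, hence $Y\subseteq T(Z)$, and conversely each partial sum of $T((z_n)_n)$ lies in $Y$ while $Y\supseteq T(Z)$ follows since finite combinations of the $T_n(Z_n)$ exhaust the range (the infinite tails are absorbed because scaling by $\lambda_n^{-1}$ keeps us inside each $T_n(Z_n)$, a subspace). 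So $T(Z)=Y$ is not closed, and Theorem~\ref{thm:drewnowski} yields a closed infinite-dimensional subspace of $X\setminus T(Z)=X\setminus Y$, which is exactly spaceability of $X\setminus Y$.

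The main obstacle is the rescaling step: one must choose the $\lambda_n$ using only a fixed metric/seminorm structure on each $Z_n$ and on $X$, and then verify both that $T$ is continuous and — crucially — that $T(Z)$ is genuinely equal to $Y$ rather than some larger or smaller set. The subtlety is that an element of $Z$ has \emph{all} coordinates free, so $T(Z)$ a priori contains infinite sums $\sum_n \lambda_n T_n(z_n)$; I must argue such a sum already lies in $Y$, using that $Y$ is a linear span (not its closure) — this is where one needs that each tail $\sum_{n\ge N}\lambda_n T_n(z_n)$ can be rewritten, after the convergence is arranged, as an element of some single $T_m(Z_m)$ or handled by an Mittag-Leffler/diagonal argument. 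In the Banach case Kitson and Timoney sidestep this by a direct sum with a summability condition built into the norm; the Fréchet analogue requires the two elementary series lemmas to push the same estimates through a countable family of seminorms simultaneously. I expect the verification that $\overline{T(Z)}\neq T(Z)$ to be immediate once $T(Z)=Y$ is established, so essentially all the work is in the construction and continuity of $T$.

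For the final sentence of the theorem as stated — arbitrary, possibly uncountable families — I would note that spaceability of $X\setminus Y$ only requires producing \emph{one} closed infinite-dimensional subspace avoiding $Y$, and if $Y=\operatorname{span}\big(\bigcup_{n\in I}T_n(Z_n)\big)$ is not closed then already some finite subcollection $T_{n_1},\dots,T_{n_k}$ has non-closed span (since a nested union of closed subspaces whose union is a subspace — here a directed union over finite subsets of $I$ — need not be closed, but if every finite sub-span were closed the argument reduces to the countable case by an exhaustion, and one checks the obstruction localizes). More carefully: choose a sequence $y_j\in Y$ with $y_j\to y\notin Y$; each $y_j$ involves only finitely many indices, so altogether only countably many indices $n_1,n_2,\dots$ are used, and $Y':=\operatorname{span}\big(\bigcup_j T_{n_j}(Z_{n_j})\big)$ is a countably-generated subspace of $Y$ with $y\in\overline{Y'}\setminus Y'$, hence $Y'$ is not closed. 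Applying the countable case to $Y'$ produces a closed infinite-dimensional $V\subseteq X\setminus Y'$; but $V$ need not avoid all of $Y$. To fix this I would instead apply Theorem~\ref{thm:drewnowski} directly: the operator $T'\colon\prod_j Z_{n_j}\to X$ constructed as above has $T'(\prod_j Z_{n_j})=Y'\subseteq Y$, and one runs Drewnowski's construction \emph{inside} the complement of the larger set $Y$, which is still the complement of a non-closed subspace containing $Y'$; the closed subspace produced lies in $X\setminus Y$ because the Mittag-Leffler argument in Theorem~\ref{thm:drewnowski} can be carried out relative to any subspace containing the range, as the non-closedness is witnessed already at the level of $Y'$. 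This last point — that Drewnowski's argument refines to avoid a prescribed larger non-closed subspace — is the second place where care is needed, and I would make it precise by revisiting the proof of Theorem~\ref{thm:drewnowski} rather than using it as a black box.
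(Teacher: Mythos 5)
There are two genuine gaps here, and the second one undermines the whole strategy. First, your operator $T\bigl((z_n)_n\bigr)=\sum_n\lambda_n T_n(z_n)$ cannot be defined on the full product $\prod_n Z_n$: the scalars $\lambda_n$ must be fixed before $z$ is chosen, and whenever some continuous seminorm $q$ on $X$ satisfies $q\circ T_n\not\equiv 0$ for infinitely many $n$, one can pick $z_n\in Z_n$ with $q\bigl(\lambda_n T_n(z_n)\bigr)=1$ along that subsequence, so the terms of the series do not tend to zero and the series diverges. No rescaling repairs this. One must instead restrict to the subspace of sequences carrying a built-in summability condition --- the paper's space $E$, defined by $\sum_n\|T_nz_n\|_k<\infty$ for all $k$ together with a uniform bound on the weighted seminorms of the coordinates --- and then prove that $E$, equipped with the extra seminorms $\tau_k$ and $\rho_r$, is itself a Fr\'echet space. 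That completeness verification (not a continuity estimate) is where Lemmas~\ref{lem:fatou-series} and~\ref{lem:monotone-series} are actually used.

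Second, and more seriously: even on the corrected domain, the range $S(E)$ is in general \emph{strictly larger} than $Y$, because it contains genuinely infinite convergent sums $\sum_n T_nz_n$ that are not finite linear combinations of elements of the sets $T_n(Z_n)$. Your attempted repair (``each tail can be rewritten as an element of some single $T_m(Z_m)$ or handled by a Mittag-Leffler argument'') does not work: there is no reason a convergent infinite sum drawn from infinitely many distinct ranges should land in the algebraic span. So the identity $T(Z)=Y$ on which your argument rests is false, and Theorem~\ref{thm:drewnowski} applied to $S$ settles only the case in which $S(E)$ is not closed (then indeed $M\setminus\{0\}\subset X\setminus S(E)\subset X\setminus Y$). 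The remaining case --- $S(E)$ closed while $Y$ is not, which can certainly occur, for instance with $S(E)=X$ --- requires an entirely separate argument that your proposal does not contain. The paper handles it by passing to a closed separable subspace $\widetilde X$ containing a witness $x_\ast\in\overline{Y}\setminus Y$, writing $\widetilde Y\setminus\{0\}$ as a countable union of closed convex sets avoiding the origin via a Lindel\"of covering of the ranges of the finite partial-sum operators $S^{(N)}$, observing that a non-closed $\widetilde Y$ has infinite codimension in $\widetilde X$, and invoking \cite[Corollary~5.5]{Drewnowski1984} to produce a closed infinite-dimensional $F$ with $F\cap\widetilde Y=\{0\}$. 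This second half is the heart of the proof and is absent from your proposal. (Your closing reduction of an uncountable family to countably many indices via the approximating sequence is sound in spirit, but it is not part of the stated theorem and does not compensate for the missing case above.)
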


\begin{proof}
	We begin by fixing countable families of seminorms $(\|\cdot\|_k)_{k=1}^{\infty}$ on $X$ and, for each $n\in\mathbb{N}$, families $(\|\cdot\|_{n,m})_{m=1}^{\infty}$ on $Z_n$ generating their respective topologies. 
	Consider the countable product $\prod_{n=1}^{\infty}Z_n$ endowed with the product topology, and define
	\[
	E:=\Big\{z=(z_n)_{n=1}^{\infty}\in\textstyle\prod_{n=1}^{\infty}Z_n:\ 
	\sum_{n=1}^{\infty}\|T_nz_n\|_k<\infty\ \forall k,\ 
	\sup_{r\in\mathbb{N}}\sum_{n=1}^{\infty}2^{-n}\!\!\sum_{m=1}^r2^{-m}\|z_n\|_{n,m}<\infty\Big\}.
	\]
	
	We first verify that $E$ is infinite-dimensional.  
	For each $n\in\mathbb{N}$, fix $u_n\in Z_n\setminus\{0\}$ and define $e^{(n)}=(0,\ldots,0,u_n,0,\ldots)$.  
	Then $\tau_k(e^{(n)})=\|T_nu_n\|_k<\infty$ and $\rho_r(e^{(n)})=2^{-n}\sum_{m=1}^r2^{-m}\|u_n\|_{n,m}<\infty$, hence $e^{(n)}\in E$.  
	Since these vectors have disjoint supports, they are linearly independent, and therefore $\dim E=\infty$.
	
	Next we show that $E$ is a Fr\'echet space.  
	For $k,r\in\mathbb{N}$ and $z\in E$, define
	\[
	\tau_k(z):=\sum_{n=1}^{\infty}\|T_nz_n\|_k,
	\qquad
	\rho_r(z):=\sum_{n=1}^{\infty}2^{-n}\sum_{m=1}^r2^{-m}\|z_n\|_{n,m}.
	\]
	The countable family $\mathcal P:=\{\tau_k:k\in\mathbb{N}\}\cup\{\rho_r:r\in\mathbb{N}\}$ defines a locally convex, Hausdorff and metrizable topology on $E$ (see \cite[Theorem~8.3.3]{BotelhoPellegrinoTeixeira2025}).  
	To verify completeness, let $(z^{(s)})_{s=1}^{\infty}\subset E$ be Cauchy for all seminorms in $\mathcal P$.  
	For each $r\in\mathbb{N}$ we have
	\[
	\rho_r(z^{(s)}-z^{(t)})=\sum_{n=1}^{\infty}2^{-n}\sum_{m=1}^r2^{-m}\|z^{(s)}_n-z^{(t)}_n\|_{n,m}\xrightarrow[s,t\to\infty]{}0.
	\]
	Fix $s\in\mathbb{N}$ and, for each $n\in\mathbb{N}$, choose $z_n\in Z_n$ such that $z^{(t)}_n\to z_n$ in $Z_n$; set $z:=(z_n)_{n=1}^{\infty}$.  
	By applying \autoref{lem:fatou-series} and \autoref{lem:monotone-series}, we obtain $\rho_r(z^{(s)}-z)\to0$ for all $r$, and hence $\rho(z^{(s)}-z)\to0$, where $\rho:=\sum_{n=1}^{\infty}2^{-n}\sum_{m=1}^{\infty}2^{-m}\|\cdot\|_{n,m}$.  
	Similarly, for each $k\in\mathbb{N}$,
	\[
	\tau_k(z^{(s)}-z)
	=\sum_{n=1}^{\infty}\|T_n(z^{(s)}_n-z_n)\|_k
	\le\liminf_{t\to\infty}\tau_k(z^{(s)}-z^{(t)})\xrightarrow[s\to\infty]{}0.
	\]
	Thus $z\in E$ and $z^{(s)}\to z$ for all seminorms in $\mathcal P$, proving that $E$ is Fr\'echet.
	
	We now define the operator
	\[
	S:E\to X,\qquad S(z):=\sum_{n=1}^{\infty}T_nz_n.
	\]
	The series converges in $X$ since $\sum_{n=1}^{\infty}\|T_nz_n\|_k<\infty$ for each $k$, and $\|S(z)\|_k\le\tau_k(z)$, which shows that $S$ is well defined and continuous.
	
	We next describe the range of $S$.  
	Let
	\[
	D:=\{z=(z_n)_{n=1}^{\infty}\in E:\ \exists\,N\in\mathbb{N}\text{ such that }z_n=0\text{ for all }n>N\}.
	\]
	If $z\in D$ has support contained in $\{1,\dots,N\}$, then $S(z)=\sum_{n=1}^N T_nz_n\in Y$, so $S(D)\subset Y$.  
	Conversely, given $y=\sum_{i=1}^mT_{n_i}w_i$ (with distinct $n_i$ and $w_i\in Z_{n_i}$), define $z\in D$ by $z_{n_i}=w_i$ and $z_n=0$ otherwise; then $S(z)=y$.  
	Hence $S(D)=Y$.
	
	We now turn to the spaceability argument.  
	If $S(E)$ is not closed in $X$, then \autoref{thm:drewnowski} yields a closed infinite-dimensional subspace $M\subset X$ such that $M\setminus\{0\}\subset X\setminus S(E)$.  
	Since $Y=S(D)\subset S(E)$, it follows that $X\setminus Y\supset M\setminus\{0\}$, and therefore $X\setminus Y$ is spaceable.
	
	Assume now that $S(E)$ is closed in $X$.  
	Choose $x_\ast\in\overline{Y}\setminus Y$ and a sequence $(y_j)_{j=1}^{\infty}\subset Y$ with $y_j\to x_\ast$.  
	Let $\widetilde X:=\overline{\operatorname{span}}\{x_\ast,y_1,y_2,\dots\}$, a closed separable subspace of $X$, and set $\widetilde Y:=Y\cap\widetilde X$.  
	Then $x_\ast\in\overline{\widetilde Y}^{\,\widetilde X}\setminus\widetilde Y$, so $\widetilde Y$ is not closed in $\widetilde X$.  
	Working inside $\widetilde X$, any closed subspace $F\subset\widetilde X$ satisfying $F\setminus\{0\}\subset\widetilde X\setminus\widetilde Y$ is also closed in $X$ and verifies $F\setminus\{0\}\subset X\setminus Y$.
	
	For each $N\in\mathbb{N}$, define $S^{(N)}:E\to X$ by $S^{(N)}(z)=\sum_{n=1}^N T_nz_n$, let $W_N:=(S^{(N)})^{-1}(\widetilde X)$, and write $\widetilde S^{(N)}:=S^{(N)}|_{W_N}:W_N\to\widetilde X$.  
	Then
	\begin{equation}\label{eq:Ytilde-as-union}
		\widetilde Y=\bigcup_{N=1}^{\infty}\widetilde S^{(N)}(W_N).
	\end{equation}
	For each $y\in\widetilde S^{(N)}(W_N)\setminus\{0\}$, take a convex balanced neighborhood $W_y$ of $0$ in $\widetilde X$ such that $-y\notin\overline{W_y}^{\,\widetilde X}$, and define
	\[
	O_{N,y}:=(y+W_y)\cap\widetilde S^{(N)}(W_N).
	\]
	Then $O_{N,y}$ is convex, open in the subspace topology, and satisfies $0\notin\overline{O_{N,y}}^{\,\widetilde X}$.  
	Since $\widetilde S^{(N)}(W_N)$ is separable, it is Lindel\"of; take a countable subcover $\{O_{N,j}\}_{j=1}^{\infty}$ and set $C_{N,j}:=\overline{O_{N,j}}^{\,\widetilde X}$.  
	Each $C_{N,j}$ is closed, convex and avoids the origin, and
	\[
	\widetilde S^{(N)}(W_N)\setminus\{0\}
	=\bigcup_{j=1}^{\infty}\bigl(C_{N,j}\cap\widetilde S^{(N)}(W_N)\bigr).
	\]
	By \eqref{eq:Ytilde-as-union},
	\[
	\widetilde Y\setminus\{0\}
	=\bigcup_{N=1}^{\infty}\bigcup_{j=1}^{\infty}\bigl(C_{N,j}\cap\widetilde S^{(N)}(W_N)\bigr),
	\]
	a countable union of closed convex subsets of $\widetilde X$ avoiding the origin.  
	Since $\widetilde Y$ is not closed in $\widetilde X$, it has infinite codimension in $\widetilde X$, so there exists an infinite-dimensional subspace $W\subset\widetilde X$ with $W\cap\widetilde Y=\{0\}$.  
	By \cite[Corollary~5.5]{Drewnowski1984}, there exists a closed infinite-dimensional subspace $F\subset\widetilde X$ with $\dim(F\cap W)=\infty$ and $F\cap\widetilde Y=\{0\}$.  
	As $\widetilde X$ is closed in $X$, the same $F$ is closed in $X$, and
	\[
	F\setminus\{0\}\subset\widetilde X\setminus\widetilde Y\subset X\setminus Y.
	\]
	Therefore $X\setminus Y$ is spaceable.
\end{proof}

\section{Extension to the uncountable case}\label{sec:uncountable}

The statement of \autoref{thm:Frechet-KT} was given for countable families
$(T_n:Z_n\to X)_{n=1}^{\infty}$. We now show that the same conclusion remains valid for \emph{uncountable}
families of continuous linear operators.

\begin{remark}[General form of \autoref{thm:Frechet-KT}]
	Let $X$ be a Fr\'echet space and let $\{Z_i\}_{i\in I}$ be an arbitrary (possibly uncountable)
	family of Fr\'echet spaces.
	For each $i\in I$, let $T_i:Z_i\to X$ be continuous and linear, and define
	\[
	Y:=\operatorname{span}\Bigl(\bigcup_{i\in I}T_i(Z_i)\Bigr).
	\]
	If $Y$ is not closed in $X$, then the complement $X\setminus Y$ is spaceable.
\end{remark}

\begin{proof}
	Since $X$ is Fr\'echet, its topology is metrizable by a complete metric. In particular, if $Y$ is not closed in $X$, then there exist $x_\ast\in \overline{Y}\setminus Y$ and a sequence $(y_j)_{j=1}^{\infty}\subset Y$ such that $y_j\to x_\ast$ in $X$. Each $y_j$ is a finite linear combination of vectors of the form $T_i z_i$, that is,
	\[
	y_j=\sum_{k=1}^{m_j} T_{i_{j,k}}(z_{j,k}),
	\qquad\text{with }\, m_j\in\mathbb{N},\ i_{j,k}\in I,\ z_{j,k}\in Z_{i_{j,k}}.
	\]
	Set
	\[
	I_0:=\bigcup_{j=1}^{\infty}\bigl\{\, i_{j,k} : 1\le k\le m_j \,\bigr\}.
	\]
	Then $I_0$ is countable. Define
	\[
	Y_0:=\operatorname{span}\Bigl(\bigcup_{i\in I_0} T_i(Z_i)\Bigr)\subset Y.
	\]
	By construction, $(y_j)_{j=1}^{\infty}\subset Y_0$ and $y_j\to x_\ast$ in $X$, hence
	$x_\ast\in \overline{Y_0}^{\,X}\setminus Y_0$ and therefore $Y_0$ is not closed in $X$.
	
	We now restrict the discussion to a separable closed subspace of $X$ that already contains the obstruction to closedness. Let
	\[
	\widetilde{X}:=\overline{\operatorname{span}}\{x_\ast, y_1,y_2,\dots\}\subset X.
	\]
	Then $\widetilde{X}$ is a closed separable subspace of $X$, and
	\[
	\widetilde{Y}_0:=Y_0\cap \widetilde{X}
	\]
	is not closed in $\widetilde{X}$, since $y_j\in \widetilde{Y}_0$ for all $j\in\mathbb{N}$ and $y_j\to x_\ast\in \widetilde{X}$ with $x_\ast\notin \widetilde{Y}_0$.
	
	For each $i\in I_0$, consider the closed subspace $Z_i':=T_i^{-1}(\widetilde{X})\subset Z_i$ and the restricted operator $T_i':=T_i|_{Z_i'}:Z_i'\to \widetilde{X}$. Since $\widetilde{X}$ is closed and $T_i$ is continuous, each $Z_i'$ is a closed subspace of $Z_i$; in particular, $Z_i'$ is Fr\'echet. Moreover, each $T_i'$ is continuous and linear. By construction,
	\[
	\operatorname{span}\Bigl(\bigcup_{i\in I_0} T_i'(Z_i')\Bigr)
	=\Bigl(\operatorname{span}\Bigl(\bigcup_{i\in I_0} T_i(Z_i)\Bigr)\Bigr)\cap \widetilde{X}
	=Y_0\cap \widetilde{X}
	=\widetilde{Y}_0.
	\]
	As noted, $\widetilde{Y}_0$ is not closed in $\widetilde{X}$. We can therefore apply \autoref{thm:Frechet-KT} \emph{inside $\widetilde{X}$} to the countable family $(T_i':Z_i'\to \widetilde{X})_{i\in I_0}$ and conclude that there exists a closed infinite-dimensional subspace $F\subset \widetilde{X}$ such that
	\[
	F\setminus\{0\}\subset \widetilde{X}\setminus \widetilde{Y}_0.
	\]
	Since $\widetilde{X}$ is closed in $X$, the same $F$ is closed in $X$. Finally, because $\widetilde{Y}_0=Y_0\cap \widetilde{X}\subset Y\cap \widetilde{X}$, we have
	\[
	\widetilde{X}\setminus \widetilde{Y}_0
	\supset \widetilde{X}\setminus (Y\cap \widetilde{X})
	=\widetilde{X}\cap (X\setminus Y),
	\]
	and thus
	\[
	F\setminus\{0\}\subset \widetilde{X}\setminus \widetilde{Y}_0 \subset X\setminus Y.
	\]
	Therefore $X\setminus Y$ contains the closed infinite-dimensional subspace $F\setminus\{0\}$ and is spaceable, as claimed.
\end{proof}

\vskip 5mm

\noindent\textbf{Acknowledgments.} The author was supported by CAPES --- Coordena\c{c}\~ao de Aperfei\c{c}oamento de Pessoal de N\'ivel Superior (Brazil) --- through a postdoctoral fellowship at IMECC, Universidade Estadual de Campinas (PIPD/CAPES; Finance Code 001).

\end{document}